\theoremstyle{plain}
\newtheorem{theorem}{Theorem}
\theoremstyle{definition}
\theoremstyle{remark}
\newtheorem{observation}{Observation}
\newcommand{\actionedge}{\draw[thick,decorate,decoration={complete sines,amplitude=3pt}]}
\newcommand{\responseedge}{\draw[dashed]}
\newcommand{\vertexmarkone}{\node[inner sep=0.5em,shape=circle,draw,fill=none] ()}
\newcommand{\vertexmarktwo}{\node[inner sep=0.6em,shape=diamond,draw,fill=none] ()}
\newcommand{\vertexmarkthree}{\node[inner sep=0.6em,shape=regular polygon, regular polygon sides=4,draw,fill=none] ()}
\newcommand{\cf}{\mathcal{F}}
\DeclareMathOperator{\ex}{ex}
\DeclareMathOperator{\sat}{sat}
\title{An upper bound on the extremal version of Hajnal's triangle-free game}
\author{Csaba Bir\'o}
\address[Csaba Bir\'o]{Department of Mathematics, University of Louisville, Louisville, KY 40292}
\email{csaba.biro@louisville.edu}
\author{Paul Horn}
\address[Paul Horn]{Department of Mathematics, University of Denver, Denver, CO 80208}
\email{paul.horn@du.edu}
\author{D. Jacob Wildstrom}
\address[D. Jacob Wildstrom]{Department of Mathematics, University of Louisville, Louisville, KY 40292}
\email{dwildstr@erdos.math.louisville.edu}
\begin{document}

\begin{abstract}
  A game starts with the empty graph on $n$ vertices, and two player
  alternate adding edges to the graph. Only moves which do not create
  a triangle are valid.  The game ends when a maximal triangle-free
  graph is reached. The goal of one player is to end the game as soon
  as possible, while the other player is trying to prolong the
  game. With optimal play, the length of the game (number of edges
  played) is called the $K_3$ game saturation number.

  In this paper we prove an upper bound for this number.
\end{abstract}

\maketitle

\section{Introduction}

Hajnal proposed the following game: let $n$ be a positive integer, and
let $G_0$ be the empty graph on $n$ vertices. Two players alternate
adding edges between non-adjacent vertices. The first to create a
triangle loses. Note that there is no difference between the edges
played by either player. The question is, of course, who has a winning
strategy for a given $n$.

The answer is only known for small values of $n$. Namely, the first player wins
for $n=6, 10, 12, 13, 14, 15, 16$, and the second player wins for
$n=3,4,5,7,8,9,11$. Values up to $n=11$ were known in the early 90's, and
Cater, Harary, and Robinson \cite{Cat-Har-Rob-01} used computers to solve the
$n=12$ case. With heavy use of computers, Pra\l at \cite{Pra-10} settled the
cases of $n=13,14,15$, and then Gordinowicz and Pra\l at \cite{Gor-Pra-12}
settled the case of $n=16$.

F\"uredi, Reimer, and Seress \cite{Fur-Rei-Ser-91} proposed a
variation of this game. In this variant, instead of complete victory
belonging to a single player, the game has a score which each player
tries to manipulate. The two players play until a maximal
triangle-free graph is achieved, and the score is the number of edges
played. The player who moves first is trying to maximize this number,
while the other one is trying to minimize it.  Assuming perfect
strategy from both players, there is a well-defined function that
assigns the score of each game (under perfect play) to each positive
integer $n$. F\"uredi, Reimer, and Seress proved the following
theorem.
\begin{theorem}
The score is at least $(n\log n)/2-2n\log\log n+O(n)$.
\end{theorem}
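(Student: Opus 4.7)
The first player (the Maximizer) aims to prevent the maximum degree of the evolving graph from growing too rapidly. The key structural observation is that a maximal triangle-free graph of bounded maximum degree already contains many edges: if $G$ is maximal triangle-free on $n$ vertices with $m$ edges and maximum degree $\Delta$, then every non-edge has a common neighbor, so
\[
\binom{n}{2} - m \;\leq\; \sum_{v} \binom{d(v)}{2} \;\leq\; (\Delta-1)\,m,
\]
which yields $m \geq \binom{n}{2}/\Delta$. Hence, if the Maximizer can keep $\Delta(G) \leq D$ until the game terminates, the score is at least $\binom{n}{2}/D$. Taking $D$ of order $n/\log n$ reproduces the leading term $(n\log n)/2$, and a delicate tuning of $D$ by a factor of $1 + O(\log\log n/\log n)$ accounts for the $-2n\log\log n$ correction.

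The strategy I would propose for the Maximizer is to play, on each of her turns, a valid (triangle-free) edge joining two vertices of currently low degree, giving preference to vertices whose degree is well below $D/2$. Because each move raises only two degrees by one, this keeps the bulk of the vertex set in a \emph{low-degree pool}, and forces the Minimizer to spend many turns on any single vertex if she wishes to push its degree past $D$. In aggregate, her attempts to concentrate degree consume so many moves that the edge count already exceeds the target bound before she can break the degree invariant.

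The main obstacle is to prove that such a Maximizer move always exists, up to the point where the game must terminate. This calls for a double-counting argument: the vertices of degree at least $D/2$ number at most $4m/D$, so the low-degree induced subgraph contains almost all of $V(G)$ as long as $m$ is below the target; meanwhile, the number of non-edges with a common neighbor is at most $(\Delta-1)m$, which remains strictly smaller than $\binom{n}{2}-m$ throughout the relevant regime. Making these counts quantitative, balancing them against the Minimizer's best response, and pinpointing the critical moment at which the degree invariant can first fail (this is what produces the $2n\log\log n$ slack) constitute the technical heart of the argument.
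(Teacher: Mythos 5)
The paper does not prove this statement; it quotes it as a theorem of F\"uredi, Reimer, and Seress \cite{Fur-Rei-Ser-91}, so your argument can only be judged on its own terms. Your opening inequality is correct: in a maximal triangle-free graph every non-edge has a common neighbor, so $\binom{n}{2}-m\le\sum_v\binom{d(v)}{2}\le(\Delta-1)m$ and hence $m\ge\binom{n}{2}/\Delta$. The gap is the central claim that the Maximizer can enforce $\Delta(G)\le D$ with $D=O(n/\log n)$. She cannot. The Minimizer may fix a vertex $c$ and, on each of her turns, join $c$ to a still-isolated vertex: such a move is always triangle-free (an isolated vertex has no common neighbor with anything), no edge the Maximizer adds can invalidate it, and isolated vertices remain plentiful for $\Omega(n)$ rounds since each round touches at most four vertices. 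After $t$ of her turns $d(c)\ge t$, so the cap $D=n/\log n$ is breached after only $n/\log n$ of her moves --- a vanishing fraction of the $(n\log n)/2$ moves the game is supposed to last. Nor is this a perverse line for her: the star is the minimum-edge maximal triangle-free graph, so concentrating degree is precisely what the Minimizer wants. (The paper itself observes, in its discussion of the $C_5$-factor obstruction, that a star on about $n/2$ vertices ``can not be stopped.'')

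Once $\Delta=\Theta(n)$, the chain of inequalities collapses: a single vertex of degree $cn$ supplies $\binom{cn}{2}=\Theta(n^2)$ cherries, enough by itself to cover all $\binom{n}{2}-m$ non-edges, so the common-neighbor count yields no lower bound on $m$. The double counting sketched in your last paragraph (at most $4m/D$ vertices of degree $\ge D/2$) does not repair this, because $\sum_v\binom{d(v)}{2}$ is dominated by the single largest degree rather than by the number of large-degree vertices. To obtain the stated bound one must argue with the entire degree sequence and design the Maximizer's strategy so that the non-edges cannot all be covered cheaply by a few vertices of huge degree; that is the actual content of the F\"uredi--Reimer--Seress argument, and it is absent from the proposal.
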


In \cite{Fur-Rei-Ser-91} and \cite{Ser-92}, the authors cite personal
communication with Paul Erd\H os, who is claimed to have proven an upper bound
for the score under perfect play of $n^2/5$. This proof is probably
lost.\footnote{We asked all three authors who claimed personal communication
with Erd\H os what the proof was, and none of them remembered. Seress suggested
that it was a short sketch of an argument, that they all agreed that it worked,
but it was not deemed to be worthy of publication.} One of the original
motivation of the authors was the reconstruction of Erd\H os's proof. After
some shorter, and later, longer attempts of proofs and improvements, which
turned out to be just slightly wrong, we had to settle on a bit weaker bound
presented in this paper. However, it is all but certain, that this proof is not
what Erd\H os had in mind.

\subsection{Competitive optimization}

The problem discussed in this article is only one of a large area of research,
called competitive optimization, with strong relations to extremal graph
theory.

Let $\cf$ be a family of graphs. If a graph $G$ has no subgraph from
$\cf$, but adding an edge between any pair of non-adjacent vertices of
$G$ would create a subgraph from $\cf$, then we say that $G$ is
\emph{$\cf$-saturated}. If $\cf$ contains a single graph $H$, then we
conventionally refer to $H$-saturation rather than $\{H\}$
saturation. The ``game board'' at the end of the extremal version of
Hajnal's game is an $K_3$-saturated graph. The well-studied
\emph{Tur\'an-number}, written $\ex(\cf;n)$ is the maximum number of
edges in an $\cf$-saturated graph on $n$ vertices. The minimum number
of edges of an $\cf$-saturated graph on $n$ vertices is called the
\emph{saturation number}, denoted by $\sat(\cf;n)$.

A game version of these numbers is defined as follows. A two person game is
played, starting on the empty graph on $n$ vertices, and the two players add
edges to the graph, alternating, until an $\cf$-saturated graph is reached. The
goal of the player who moves first is to maximize the number of edges in this
final graph, while the second player is trying to minimize the number of edges.
The number of edges played under optimal play is called the game saturation
number of $\cf$, denoted by $\sat_g(\cf;n)$.

An even more general version of the game saturation number where the
``playable'' edges are limited by an initial ``host'' graph (which is $K_n$ for
us, as all edges are playable) was defined in full generality by West
\cite{West}. Many recent results were obtained in specific settings, see
e.g.~\cite{Car-Kin-Rei-Wes-14-u} and \cite{Lee-Rie-14-u}.

Strictly speaking, there are two kinds of game saturation numbers, and the
other kind, denoted by $\sat_g'(\cf;n)$ is if the first player is the one
minimizing the number of edges in the final graph. While these two versions may
differ significantly (see e.g.~\cite{Car-Kin-Rei-Wes-14-u}), our proof works in
both cases, so our (asymptotic) result applies to both types of game saturation
number.  For convenience, we will assume that the minimizing player is starting
the game, though the reader will see that it does not really matter.

\subsection{Some proof ideas}

It is obvious that for any family $\cf$, we have $\sat(\cf;n)\leq \sat_g(\cf;n)\leq
\ex(\cf;n)$. These provide the trivial bounds $n-1$ and $n^2/4$ for the
triangle-free game. While there are short proofs that provide modest
improvements on the lower bound, we are not aware of any short proof of any
(multiplicative) improvement on the upper bound. We make an observation that
will be crucial for our proof.

\begin{observation}
There are at most $10$ edges between any pair of $C_5$'s in a triangle-free
graph. Also, between a vertex and a $C_5$, there are at most $2$ edges in a
triangle-free graph.
\end{observation}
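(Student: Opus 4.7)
The plan is to prove the vertex-to-$C_5$ inequality first, since the $C_5$-to-$C_5$ bound follows by summing it over $5$ vertices. Fix a $5$-cycle on vertices $v_1,v_2,v_3,v_4,v_5$ (in cyclic order) and a vertex $v$ outside the cycle. Observe that any two neighbors $v_i, v_j$ of $v$ on the cycle must be non-adjacent in $C_5$, since otherwise $v, v_i, v_j$ would form a triangle. In other words, the set $N(v)\cap\{v_1,\dots,v_5\}$ must be an independent set of $C_5$, and since $\alpha(C_5)=2$, this set has size at most $2$. This gives the second statement.

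For the first statement, I would consider two vertex-disjoint $5$-cycles $A$ and $B$ in a triangle-free graph. By the statement just proved, each vertex of $A$ sends at most $2$ edges into $B$. Summing over the five vertices of $A$ yields a total of at most $5\cdot 2=10$ edges between $A$ and $B$.

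I do not expect any real obstacle: both bounds reduce to the triviality that a vertex cannot be adjacent to both endpoints of an edge in a triangle-free graph, combined with $\alpha(C_5)=2$. The only minor point worth noting is the convention that the two $C_5$'s are vertex-disjoint, which is the only case in which the claim '$10$ edges between them' has a clean meaning.
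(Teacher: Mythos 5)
Your proof is correct: the neighbors of a vertex within a $C_5$ must form an independent set in a triangle-free graph, $\alpha(C_5)=2$ gives the bound of $2$, and summing over the five vertices of one cycle gives the bound of $10$. The paper states this as an observation without proof, and your argument is precisely the standard justification the authors evidently had in mind.
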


In addition to the fact that a $C_5$ itself is $K_3$-saturated, if one
could build just one $C_5$ using, say, $k$ vertices of the graph
(i.e.~all other vertices will have degree $0$), then this strategy may
be repeated on empty vertices, and after the whole game board is
filled with these building blocks, the number of edges is at most
\[
\frac{\left(\frac{n}{k}\right)^2}{2}\cdot 10+
\frac{n^2\left(\frac{k-5}{k}\right)^2}{4}+
n\left(\frac{k-5}{k}\right)\cdot\frac{n}{k}\cdot 2+
o(n^2)\approx \frac{k^2-2k+5}{4k^2}n^2.
\]
(Essentially the same calculation is detailed out for $k=11$ at the end of the
proof of our main theorem.)

Here $k\geq 5$. If $k$ is very large, the this bound gets very close to
$n^2/4$, but with any finite $k$, it provides a multiplicative improvement over
the trivial bound. The only problem is that building even one $C_5$ on finitely
many vertices seem not be that easy. It is possible, as our proof will
show (and more), but there are some technical difficulties.

If, by some miracle, $k=5$ is possible, we would get the Erd\H os bound $\approx
n^2/5$. In fact, if we one could just build an almost perfect $C_5$-factor of
the graph, with $o(n^2)$ vertices not in the $C_5$-factor, the $n^2/5$ bound
would follow (asymptotically). Unfortunately, no strategy can guarantee such a
$C_5$-factor. The maximizing player can build a star on about
$n/2$ vertices, and this can not be stopped. Then any $C_5$ may only use two
vertices of the star leaves, so it is not possible to create more than roughly
$n/6$ copies of $C_5$'s.

\section{The $C_5$-building strategy}

Still the core of this technique for ending the game as soon as possible is
to build as many disjoint cycles of length 5 as possible; since a
$C_5$ is the minimal nonbipartite triangle-free graph, the
incorporation of as many vertices as possible into $C_5$ subgraphs
minimizes the number of vertices which can be incorporated into a
large, balanced bipartite graph.

However, forcing the construction of $C_5$ subgraphs when confronted
with an opponent who is trying to prevent the construction of a $C_5$
is not easy. It is easy to build a path on 4 vertices in a way which
cannot be prevented by the opponent, but the attempt to extend this
into a $C_5$ can be stymied by the opponent prematurely closing it
into a $C_4$. For that reason, a more complicated technique in which
two parallel paths are constructed and then joined is necessary.

\begin{theorem}
  Starting with $n$ vertices and no edges, there is a sequence of
  moves by one player which, regardless of the other player's actions,
  leads to $\lfloor\frac{n-2}{11}\rfloor$ disjoint $C_5$s being
  constructed.
  \label{theorem:C5-construction}
\end{theorem}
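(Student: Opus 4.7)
The plan is to have the minimizing player operate in phases, each phase producing one of the claimed disjoint $C_5$'s and consuming at most $11$ vertices from the reservoir of isolated vertices. At the start of a phase the minimizer designates a batch $B$ of $11$ currently isolated vertices and restricts all of her subsequent moves to edges inside $B$ until the phase ends; the opponent, in contrast, is free to play inside or outside $B$. The phase terminates as soon as a $C_5$ appears on $5$ vertices of $B$, at which point the remaining $6$ vertices are discarded and a new batch is chosen from the still-isolated vertices. Since each phase removes only $11$ vertices from the reservoir (opponent's outside moves cost only vertices that we simply never put into a future batch), the total number of completed phases is at least $\lfloor(n-2)/11\rfloor$, the constant $2$ providing edge-effect slack.

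The heart of the argument is the following local claim: on $11$ isolated vertices, with the builder to move first in the local subgame, she can force a $C_5$ on $5$ of those vertices regardless of how the opponent plays inside the batch. Once the local claim is proven, the global result follows by iteration: we may assume the opponent plays only inside $B$ during a phase, because any outside move is equivalent to the builder receiving an extra uncontested tempo inside $B$, which can only help.

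To prove the local claim I would follow the hint in Section~2 and use a \emph{two parallel paths} technique. The builder starts by playing two vertex-disjoint edges, extends each into a $P_4$ on $4+4=8$ vertices of $B$, and keeps $3$ spare vertices in reserve. Converting a $P_4 = a\text{-}b\text{-}c\text{-}d$ into a $C_5$ requires a fifth vertex $e$ with $ae, de\in E$ and no triangle created; the opponent can spoil a single $P_4$ by playing the chord $ad$, which would turn $a\text{-}b\text{-}c\text{-}d\text{-}a$ into a $C_4$ and obstruct the closure. The point of running two paths simultaneously is that the opponent cannot block closure of both within the move budget available; a careful case analysis on the opponent's possible responses (chord inside a path, crossing edge between the two paths, or premature edge to a spare vertex) shows that at least one of the two $P_4$'s admits a triangle-free closure through one of the three spares.

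The main obstacle I expect is precisely this case analysis. The opponent has many legal responses inside $B$, and the builder's strategy must have a triangle-free reply to every one of them while keeping at least one candidate closure alive. Making the count come out to exactly $11$ — two $P_4$'s using $8$ vertices plus $3$ spares — is what forces the rate $\lfloor(n-2)/11\rfloor$ in the theorem; any looser accounting would yield a larger denominator here and a correspondingly weaker upper bound in the main theorem of the paper. Once the local claim is nailed down, the phase-by-phase bookkeeping needed to deduce the theorem is routine.
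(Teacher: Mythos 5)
Your global bookkeeping does not support the rate $\lfloor (n-2)/11\rfloor$. In each phase you commit to playing only inside a batch $B$ of $11$ isolated vertices while the opponent is free to play outside $B$, and you then declare that vertices the opponent touches outside $B$ are ``never put into a future batch.'' But forcing a $C_5$ inside $B$ against resistance costs you several moves (at least five, and in any honest case analysis more like seven or eight), and the opponent answers each one. If every answer is an edge joining two fresh isolated vertices outside $B$, then a phase in which you spend $k$ moves depletes the reservoir of isolated vertices by $11+2k$, not by $11$, so your iteration only yields on the order of $n/(11+2k)$ cycles --- roughly $n/25$ --- and the additive slack of $2$ cannot absorb a loss that is linear in the number of phases. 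This is a genuine gap, not a routine detail.

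The paper avoids exactly this leak by refusing to discard the opponent's outside edges. It runs a potential (the ``count''), equal to the number of components met by the not-yet-used vertices plus a bonus for partial structures ($P_2$'s, $P_3$'s, and distance-$3$ pairs) already present among them. An opponent move played elsewhere merges two components and so costs the potential only $1$ per move, and the $P_2$ or $P_3$ it creates is credited back as a head start: each new construction round explicitly begins from whatever partial paths already exist in $U$, rather than from $11$ pristine vertices. The worst-case drop of this potential per completed $C_5$ is then shown, via the table of end configurations, to be $11$; the denominator is an amortized count of components plus opponent tempi, not a count of vertices consumed. Your local two-parallel-paths claim is in the same spirit as the paper's construction (which uses a $P_4$ joined to a second $P_3$ or $P_4$, at most nine vertices) and is plausibly provable, but without a mechanism for recycling the opponent's outside moves the theorem as stated does not follow from your argument.
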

\begin{proof}
  We shall show that a particular $C_5$-construction procedure can be
  repeated several times. During the course of this procedure, let us
  denote the set of vertices which are not yet vertices of a
  constructed $C_5$ as $U$. Let the ``count'' of $U$ be calculated
  according to the following method: $U$ has count equal to the number
  of different components of the graph in which the vertices of $U$
  lie, plus whichever is largest of the following measures of partial
  progress towards the next $C_5$-construction:
  \begin{itemize}
  \item $5$ if there is a $P_3$ subgraph among the vertices of $U$ and
    another vertex of $U$ in the same component as the $P_3$ at a
    distance of at least 3 from an endpoint of the $P_3$.
  \item $4$ if there is a $P_3$ subgraph among the vertices of $U$.
  \item $3$ if there is a $P_2$ subgraph among the vertices of $U$ and
    another vertex of $U$ in the same component as the $P_2$ at a
    distance of at least 3 from an endpoint of the $P_2$.
  \item $2$ if there is a $P_2$ subgraph among the vertices of $U$.
  \item $1$ if there are two vertices of $U$ at a distance of at least
    3 within a single component of the graph.
  \item $0$ otherwise.
  \end{itemize}
  The construction procedure below will be shown to be implementable
  whenever the count exceeds 13, and will have a net effect of
  reducing the count by at most 11; thus, the procedure may be
  implemented $\lfloor\frac{n-2}{11}\rfloor$ times before the count
  becomes too low to repeat it.

  The $C_5$-constructing player will begin by building a $P_4$ among
  vertices in $U$. If there is already a $P_3$ in the graph, doing so
  will take one move and reduce the number of components in the graph
  by 1. If there is already a $P_2$ in the graph and a vertex at
  distance 3 from the $P_2$, this construction takes two moves, but
  will only reduce the number of components in the graph by 1, as the
  first edge may be added between vertices in a single component. If
  there is already a $P_2$ in the graph and no such other vertex, this
  construction takes two moves, and reduces the number of components
  in the graph by 2. if there are no usable subgraphs then this
  construction step takes 3 moves; if two vertices are at a distance 2
  in a single component it will only reduce the number of components
  by 2, but otherwise it reduces the number of components by 3. Thus,
  we may note that the sum of the reduction in component count and
  number of moves taken will always be equal to six minus the
  above-mentioned ``count bonus''; since every move used allows the
  opponent an opportunity to move and possibly reduce the count, we
  may conclude that the $P_4$-construction stage, if completed without
  opponent interference, will result in a reduction of the count by at most 6,
  as at this point there is no guarantee that count-bonus structures
  remain.

  We may note that it is impossible for the other player to obstruct
  this construction, since adding edges among vertices in distinct
  components is always permitted; in the course of constructing this
  $P_4$, either the opponent will have used their moves in
  count-reduction, reducing the count by at most 6 as mentioned above,
  or have used one move in converting this $P_4$ into a $C_4$, and
  effecting a count reduction of no more than 5. We will address these possibilities in two subsections.

  \subsection{Opponent does not create a $C_4$}

  Now we shall create a second $P_3$ using vertices from $U$ in
  distinct components. This $P_3$ requires the addition of 2 edges,
  during which the opponent may add two edges. These edges may include
  external edges, a single edge converting the original $P_4$ to a
  $C_4$, edges between the $P_4$ and developing $P_3$, or edges to
  from the $P_3$ to an external vertex adjacent to the $P_4$. Note
  that we may guarantee that one of the endpoints of this new $P_3$ is
  ``untouched'' as such: after we have added one of our two edges, at
  most one vertex of this $P_3$ will be ``touched'', and we may add
  the new edge such that the touched vertex is the middle, and thus
  both endpoints are untouched, after which the opponent has the
  opportunity to touch at most one of the endpoints.

  \begin{figure}
    \centering
    \begin{tikzpicture}[nodes={shape=circle,fill,draw}]
      \node (u1) at (0,0) {};
      \node (u2) at (0,1) {};
      \node (u3) at (0,2) {};
      \node (u4) at (0,3) {};
      \draw (u1)--(u2)--(u3)--(u4);
      \node (v1) at (1.5,1) {};
      \node (v2) at (1.5,2) {};
      \draw (v1)--(v2);
      \actionedge (u1)--(v1);
      \responseedge (u4)--(v1);
      \responseedge (u3)--(v2);
      \node[fill=none,draw=none] () at (0.75,-0.5) {$G_1$};
    \end{tikzpicture}
    \quad\quad\quad
    \begin{tikzpicture}[nodes={shape=circle,fill,draw}]
      \node (u1) at (0,0) {};
      \node (u2) at (0,1) {};
      \node (u3) at (0,2) {};
      \node (u4) at (0,3) {};
      \draw (u1)--(u2)--(u3)--(u4);
      \node (v1) at (1.5,1) {};
      \node (v2) at (1.5,2) {};
      \draw (v1)--(v2);
      \actionedge (u2)--(v1);
      \responseedge (u4)--(v2);
      \node[fill=none,draw=none] () at (0.75,-0.5) {$G_2$};
    \end{tikzpicture}

    \caption{There are two ways that the opponent could place an edge,
      shown as wavy lines, between the $P_4$ and an in-progress
      $P_3$; in response any of the dashed edges will complete a
      $C_5$.}
    \label{fig:P4-and-P2-connected}
  \end{figure}
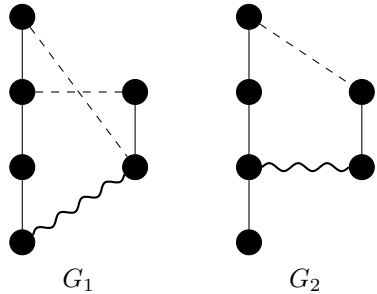

  Addressing the possible interferential opponent actions
  individually, the easiest one to dispense with is the prospect that
  the opponent adds an edge between the constructed $P_4$ and the
  under-construction $P_3$, which will result in a construct
  isomorphic to one of the two shown in
  Figure~\ref{fig:P4-and-P2-connected}. Since in adding the edge the
  opponent will have used the only opportunity to create obstructions
  relating to the vertices in the $P_3$, we know that any edge
  incident on the two vertices of the $P_3$ does not form a $K_3$
  unless it does so with edges known to us; thus the addition of any
  of the dashed edges in Figure~\ref{fig:P4-and-P2-connected}
  will complete a $C_5$.

  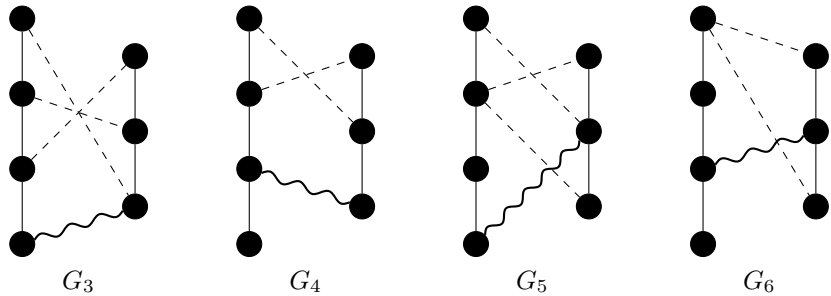
\begin{figure}
    \centering
    \begin{tikzpicture}[nodes={shape=circle,fill,draw}]
      \node (u1) at (0,0) {};
      \node (u2) at (0,1) {};
      \node (u3) at (0,2) {};
      \node (u4) at (0,3) {};
      \draw (u1)--(u2)--(u3)--(u4);
      \node (v1) at (1.5,0.5) {};
      \node (v2) at (1.5,1.5) {};
      \node (v3) at (1.5,2.5) {};
      \draw (v1)--(v2)--(v3);
      \actionedge (u1)--(v1);
      \responseedge (u4)--(v1);
      \responseedge (u3)--(v2);
      \responseedge (u2)--(v3);
      \node[fill=none,draw=none] () at (0.75,-0.5) {$G_3$};
    \end{tikzpicture}
    \quad\quad\quad
    \begin{tikzpicture}[nodes={shape=circle,fill,draw}]
      \node (u1) at (0,0) {};
      \node (u2) at (0,1) {};
      \node (u3) at (0,2) {};
      \node (u4) at (0,3) {};
      \draw (u1)--(u2)--(u3)--(u4);
      \node (v1) at (1.5,0.5) {};
      \node (v2) at (1.5,1.5) {};
      \node (v3) at (1.5,2.5) {};
      \draw (v1)--(v2)--(v3);
      \actionedge (u2)--(v1);
      \responseedge (u4)--(v2);
      \responseedge (u3)--(v3);
      \node[fill=none,draw=none] () at (0.75,-0.5) {$G_4$};
    \end{tikzpicture}
    \quad\quad\quad
    \begin{tikzpicture}[nodes={shape=circle,fill,draw}]
      \node (u1) at (0,0) {};
      \node (u2) at (0,1) {};
      \node (u3) at (0,2) {};
      \node (u4) at (0,3) {};
      \draw (u1)--(u2)--(u3)--(u4);
      \node (v1) at (1.5,0.5) {};
      \node (v2) at (1.5,1.5) {};
      \node (v3) at (1.5,2.5) {};
      \draw (v1)--(v2)--(v3);
      \actionedge (u1)--(v2);
      \responseedge (u4)--(v2);
      \responseedge (u3)--(v3);
      \responseedge (u3)--(v1);
      \node[fill=none,draw=none] () at (0.75,-0.5) {$G_5$};
    \end{tikzpicture}
    \quad\quad\quad
    \begin{tikzpicture}[nodes={shape=circle,fill,draw}]
      \node (u1) at (0,0) {};
      \node (u2) at (0,1) {};
      \node (u3) at (0,2) {};
      \node (u4) at (0,3) {};
      \draw (u1)--(u2)--(u3)--(u4);
      \node (v1) at (1.5,0.5) {};
      \node (v2) at (1.5,1.5) {};
      \node (v3) at (1.5,2.5) {};
      \draw (v1)--(v2)--(v3);
      \actionedge (u2)--(v2);
      \responseedge (u4)--(v1);
      \responseedge (u4)--(v3);
      \node[fill=none,draw=none] () at (0.75,-0.5) {$G_6$};
    \end{tikzpicture}

    \caption{There are four ways that the opponent could place an edge,
      shown in wavy lines, between the $P_4$ and a completed $P_3$; in
      response any of the dashed edges will complete a $C_5$.}
    \label{fig:P4-and-P3-connected}
   \end{figure}

   If, on the other hand, the opponent connects the $P_4$ and a
   just-completed $P_3$, it is possible that they have already spent a
   previous turn creating complications for one of the vertices in the
   $P_3$. These possibilities are illustrated in
   Figure~\ref{fig:P4-and-P3-connected}, and in each case, there are
   at least two vertices in the $P_3$ which can serve as termini for
   edges which form a $C_5$. Since the opponent has at most one
   opportunity to construct new edges incident to the $P_3$, they will
   be able to render at most one of those termini unusable. Thus, when
   the opponent builds an edge between the $P_3$ and $P_4$, we are
   guaranteed the ability to complete a $C_5$ in one more step.
   
   If, on the other hand, the opponent spends the two moves during the
   $P_3$ in other ways, we become responsible for building the edge
   between these two paths ourselves. If the opponent has not
   interfered with our construction during these steps, we may
   ourselves add the edge depicted with a wavy line in the first case
   of Figure~\ref{fig:P4-and-P3-connected} and then, with three
   possible $C_5$-completing edges among six distinct vertices, cannot
   be stopped.
   
   If the opponent does interfere, we know the nature of the
   interference: since we have addressed the possibility of an
   opponent-added edge between the paths, and since the only
   preventative to our adding edges freely is that a vertex in the
   $P_3$ and a set of non-adjacent vertices in the $P_4$ might
   possibly be mutually adjacent to a third vertex; the opponent has
   two opportunities to do this, and, as previously noted, the
   construction of the $P_3$ can be dynamically adjusted to ensure
   that one of the endpoints of the $P_3$ is untouched. Considering
   only the most restrictive choices of vertices in the $P_4$ which
   are mutually adjacent to some vertex, there are six possible
   scenarios up to isomorphism which may arise, depicted in
   Figure~\ref{fig:P4-and-P3-with-constraints}. Although the $P_4$ may
   be arbitrarily highly adjacent to external vertices, as depicted by
   the rings around each vertex, the $P_3$ was constructed with
   initially clean vertices, and so at most two external adjacencies
   can be introduced by the opponent while we are building the
   $P_3$. In each scenario, the move depicted with a wavy line
   produces two legal $C_5$-completing moves which do have nonadjacent
   endpoints in the $P_3$, so the opponent will be able to render at
   most one of them illegal, resulting in successful completion of a
   $C_5$.

   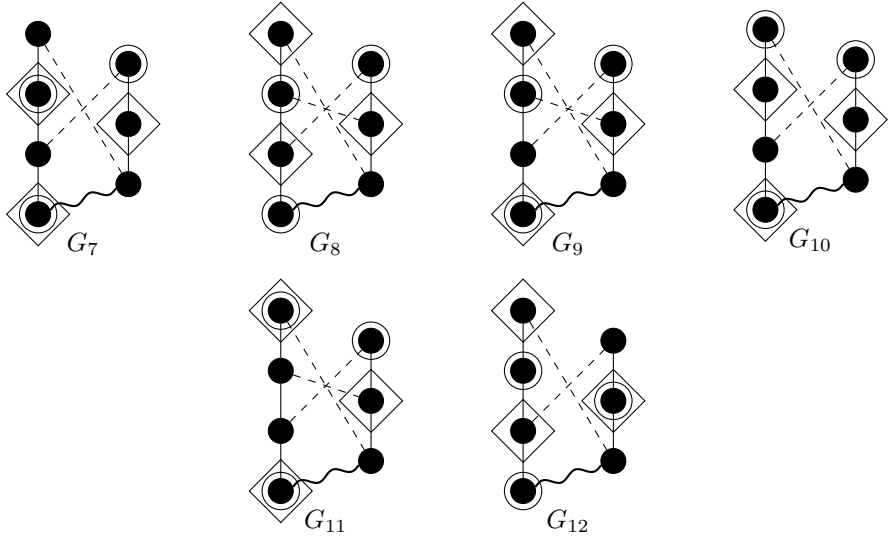
\begin{figure}
     \centering
     \begin{tikzpicture}[scale=0.8,nodes={shape=circle,fill,draw}]
       \node (u1) at (0,0) {};
       \node (u2) at (0,1) {};
       \node (u3) at (0,2) {};
       \node (u4) at (0,3) {};
       \draw (u1)--(u2)--(u3)--(u4);
       
       \vertexmarkone at (u1) {};
       \vertexmarkone at (u3) {};
       
       \vertexmarktwo at (u1) {};
       \vertexmarktwo at (u3) {};
       
       \node (v1) at (1.5,0.5) {};
       \node (v2) at (1.5,1.5) {};
       \node (v3) at (1.5,2.5) {};
       \draw (v1)--(v2)--(v3);

       \vertexmarkone at (v3) {};
       \vertexmarktwo at (v2) {};

       \actionedge (u1)--(v1);
       \responseedge (u4)--(v1);
       \responseedge (u2)--(v3);
      \node[fill=none,draw=none] () at (0.75,-0.5) {$G_7$};
     \end{tikzpicture}
     \quad\quad\quad
     \begin{tikzpicture}[scale=0.8,nodes={shape=circle,fill,draw}]
       \node (u1) at (0,0) {};
       \node (u2) at (0,1) {};
       \node (u3) at (0,2) {};
       \node (u4) at (0,3) {};
       \draw (u1)--(u2)--(u3)--(u4);
       
       \vertexmarkone at (u1) {};
       \vertexmarkone at (u3) {};
       
       \vertexmarktwo at (u2) {};
       \vertexmarktwo at (u4) {};
       
       \node (v1) at (1.5,0.5) {};
       \node (v2) at (1.5,1.5) {};
       \node (v3) at (1.5,2.5) {};
       \draw (v1)--(v2)--(v3);

       \vertexmarkone at (v3) {};
       \vertexmarktwo at (v2) {};
       \actionedge (u1)--(v1);
       \responseedge (u2)--(v3);
       \responseedge (u3)--(v2);
       \responseedge (u4)--(v1);
      \node[fill=none,draw=none] () at (0.75,-0.5) {$G_8$};
     \end{tikzpicture}
     \quad\quad\quad
     \begin{tikzpicture}[scale=0.8,nodes={shape=circle,fill,draw}]
       \node (u1) at (0,0) {};
       \node (u2) at (0,1) {};
       \node (u3) at (0,2) {};
       \node (u4) at (0,3) {};
       \draw (u1)--(u2)--(u3)--(u4);
       
       \vertexmarkone at (u1) {};
       \vertexmarkone at (u3) {};
       
       \vertexmarktwo at (u1) {};
       \vertexmarktwo at (u4) {};
       
       \node (v1) at (1.5,0.5) {};
       \node (v2) at (1.5,1.5) {};
       \node (v3) at (1.5,2.5) {};
       \draw (v1)--(v2)--(v3);

       \vertexmarkone at (v3) {};
       \vertexmarktwo at (v2) {};
       \actionedge (u1)--(v1);
       \responseedge (u2)--(v3);
       \responseedge (u3)--(v2);
       \responseedge (u4)--(v1);
      \node[fill=none,draw=none] () at (0.75,-0.5) {$G_9$};
     \end{tikzpicture}
     \quad\quad\quad
     \begin{tikzpicture}[scale=0.8,nodes={shape=circle,fill,draw}]
       \node (u1) at (0,0) {};
       \node (u2) at (0,1) {};
       \node (u3) at (0,2) {};
       \node (u4) at (0,3) {};
       \draw (u1)--(u2)--(u3)--(u4);
       
       \vertexmarkone at (u1) {};
       \vertexmarkone at (u4) {};
       
       \vertexmarktwo at (u1) {};
       \vertexmarktwo at (u3) {};
       
       \node (v1) at (1.5,0.5) {};
       \node (v2) at (1.5,1.5) {};
       \node (v3) at (1.5,2.5) {};
       \draw (v1)--(v2)--(v3);

       \vertexmarkone at (v3) {};
       \vertexmarktwo at (v2) {};

       \actionedge (u1)--(v1);
       \responseedge (u2)--(v3);
       \responseedge (u4)--(v1);
      \node[fill=none,draw=none] () at (0.75,-0.5) {$G_{10}$};
     \end{tikzpicture}
     \quad\quad\quad
     \begin{tikzpicture}[scale=0.8,nodes={shape=circle,fill,draw}]
       \node (u1) at (0,0) {};
       \node (u2) at (0,1) {};
       \node (u3) at (0,2) {};
       \node (u4) at (0,3) {};
       \draw (u1)--(u2)--(u3)--(u4);
       
       \vertexmarkone at (u1) {};
       \vertexmarkone at (u4) {};
       
       \vertexmarktwo at (u1) {};
       \vertexmarktwo at (u4) {};
       
       \node (v1) at (1.5,0.5) {};
       \node (v2) at (1.5,1.5) {};
       \node (v3) at (1.5,2.5) {};
       \draw (v1)--(v2)--(v3);

       \vertexmarkone at (v3) {};
       \vertexmarktwo at (v2) {};

       \actionedge (u1)--(v1);
       \responseedge (u2)--(v3);
       \responseedge (u3)--(v2);
       \responseedge (u4)--(v1);
      \node[fill=none,draw=none] () at (0.75,-0.5) {$G_{11}$};
     \end{tikzpicture}
     \quad\quad\quad
     \begin{tikzpicture}[scale=0.8,nodes={shape=circle,fill,draw}]
       \node (u1) at (0,0) {};
       \node (u2) at (0,1) {};
       \node (u3) at (0,2) {};
       \node (u4) at (0,3) {};
       \draw (u1)--(u2)--(u3)--(u4);
       
       \vertexmarkone at (u1) {};
       \vertexmarkone at (u3) {};
       
       \vertexmarktwo at (u2) {};
       \vertexmarktwo at (u4) {};
       
       \node (v1) at (1.5,0.5) {};
       \node (v2) at (1.5,1.5) {};
       \node (v3) at (1.5,2.5) {};
       \draw (v1)--(v2)--(v3);

       \vertexmarkone at (v2) {};
       \vertexmarktwo at (v2) {};

       \actionedge (u1)--(v1);
       \responseedge (u2)--(v3);
       \responseedge (u4)--(v1);
      \node[fill=none,draw=none] () at (0.75,-0.5) {$G_{12}$};
     \end{tikzpicture}
     \caption{The opponent's possible interferences with vertices in
       the $P_3$ is depicted symbolically by surrounding the vertex
       with a ring; matching rings indicate mutual adjacency to a
       third vertex, prohibiting edges between vertices with matching
       rings; advantageous moves and resulting $C_5$ constructions are
       labeled with wavy and dashed lines respectively.}
     \label{fig:P4-and-P3-with-constraints}
   \end{figure}
   
   \subsection{Opponent creates a $C_4$}
   
   Now we address those cases in which the opponent, in
   response to the formation of the $P_4$, closes it into a
   $C_4$. Under such a circumstance, we will extend the $P_3$ into a
   $P_4$. If the opponent chooses to place an edge between the $C_4$
   and an under-construction $P_4$, then we will be faced with one of
   the situations described in
   Figure~\ref{fig:C4-and-P4-interrupted}. In each of these situations
   one of the $C_5$-completion edges must still be a valid move: if we
   have only completed a $P_2$, the opponent has had no opportunities
   to interfere with our $P_2$, while if a $P_3$ is completed, only
   one of the at two possible endpoints in the $P_3$ of the
   $C_5$-completing edge could be constrained by opponent activity,
   and if we have completed the $P_4$, there might be as many as two
   vertices constrained by the opponent in the $P_4$, but in all cases
   there are three vertices serving to construct a $C_5$-completing
   edge.

   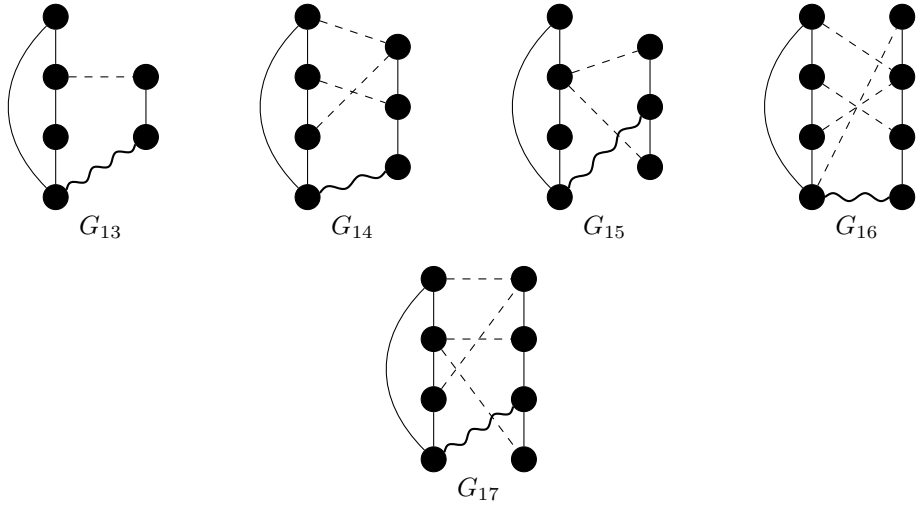
\begin{figure}
     \centering
     \begin{tikzpicture}[scale=0.8,nodes={shape=circle,fill,draw}]
       \node (u1) at (0,0) {};
       \node (u2) at (0,1) {};
       \node (u3) at (0,2) {};
       \node (u4) at (0,3) {};
       \draw (u1)--(u2)--(u3)--(u4)..controls (-1,2) and (-1,1)..(u1);
       
       \node (v1) at (1.5,1) {};
       \node (v2) at (1.5,2) {};

       \draw (v1)--(v2);
       \actionedge (u1)--(v1);
       \responseedge (u3)--(v2);
      \node[fill=none,draw=none] () at (0.75,-0.5) {$G_{13}$};
     \end{tikzpicture}
     \quad\quad\quad
     \begin{tikzpicture}[scale=0.8,nodes={shape=circle,fill,draw}]
       \node (u1) at (0,0) {};
       \node (u2) at (0,1) {};
       \node (u3) at (0,2) {};
       \node (u4) at (0,3) {};
       \draw (u1)--(u2)--(u3)--(u4)..controls (-1,2) and (-1,1)..(u1);
       
       \node (v1) at (1.5,0.5) {};
       \node (v2) at (1.5,1.5) {};
       \node (v3) at (1.5,2.5) {};

       \draw (v1)--(v2)--(v3);
       \actionedge (u1)--(v1);
       \responseedge (u3)--(v2);
       \responseedge (u2)--(v3);
       \responseedge (u4)--(v3);
      \node[fill=none,draw=none] () at (0.75,-0.5) {$G_{14}$};
     \end{tikzpicture}
     \quad\quad\quad
     \begin{tikzpicture}[scale=0.8,nodes={shape=circle,fill,draw}]
       \node (u1) at (0,0) {};
       \node (u2) at (0,1) {};
       \node (u3) at (0,2) {};
       \node (u4) at (0,3) {};
       \draw (u1)--(u2)--(u3)--(u4)..controls (-1,2) and (-1,1)..(u1);
       
       \node (v1) at (1.5,0.5) {};
       \node (v2) at (1.5,1.5) {};
       \node (v3) at (1.5,2.5) {};

       \draw (v1)--(v2)--(v3);
       \actionedge (u1)--(v2);
       \responseedge (u3)--(v1);
       \responseedge (u3)--(v3);
      \node[fill=none,draw=none] () at (0.75,-0.5) {$G_{15}$};
     \end{tikzpicture}
     \quad\quad\quad
     \begin{tikzpicture}[scale=0.8,nodes={shape=circle,fill,draw}]
       \node (u1) at (0,0) {};
       \node (u2) at (0,1) {};
       \node (u3) at (0,2) {};
       \node (u4) at (0,3) {};
       \draw (u1)--(u2)--(u3)--(u4)..controls (-1,2) and (-1,1)..(u1);
       
       \node (v1) at (1.5,0) {};
       \node (v2) at (1.5,1) {};
       \node (v3) at (1.5,2) {};
       \node (v4) at (1.5,3) {};

       \draw (v1)--(v2)--(v3)--(v4);
       \actionedge (u1)--(v1);
       \responseedge (u1)--(v4);
       \responseedge (u2)--(v3);
       \responseedge (u3)--(v2);
       \responseedge (u4)--(v3);
      \node[fill=none,draw=none] () at (0.75,-0.5) {$G_{16}$};
     \end{tikzpicture}
     \quad\quad\quad
     \begin{tikzpicture}[scale=0.8,nodes={shape=circle,fill,draw}]
       \node (u1) at (0,0) {};
       \node (u2) at (0,1) {};
       \node (u3) at (0,2) {};
       \node (u4) at (0,3) {};
       \draw (u1)--(u2)--(u3)--(u4)..controls (-1,2) and (-1,1)..(u1);
       
       \node (v1) at (1.5,0) {};
       \node (v2) at (1.5,1) {};
       \node (v3) at (1.5,2) {};
       \node (v4) at (1.5,3) {};

       \draw (v1)--(v2)--(v3)--(v4);
       \actionedge (u1)--(v2);
       \responseedge (u2)--(v4);
       \responseedge (u3)--(v3);
       \responseedge (u3)--(v1);
       \responseedge (u4)--(v4);
      \node[fill=none,draw=none] () at (0.75,-0.5) {$G_{17}$};
     \end{tikzpicture}
     \caption{Interruptions of $P_4$ construction after a $C_4$
       construction. The wavy edges indicate the opponent's interruptions,
       while dashed edges are ways to complete a $C_5$.}
    \label{fig:C4-and-P4-interrupted}
   \end{figure}
     
   \begin{figure}
     \centering
     \begin{tikzpicture}[scale=0.8,nodes={shape=circle,fill,draw}]
       \node (u1) at (0,0) {};
       \node (u2) at (0,1) {};
       \node (u3) at (0,2) {};
       \node (u4) at (0,3) {};
       \draw (u1)--(u2)--(u3)--(u4)..controls (-1,2) and (-1,1)..(u1);
       
       \vertexmarkone at (u1) {};
       \vertexmarkone at (u3) {};
       
       \vertexmarktwo at (u1) {};
       \vertexmarktwo at (u3) {};
       
       \vertexmarkthree at (u2) {};
       \vertexmarkthree at (u4) {};

       \node (v1) at (1.5,0) {};
       \node (v2) at (1.5,1) {};
       \node (v3) at (1.5,2) {};
       \node (v4) at (1.5,3) {};
       \draw (v1)--(v2)--(v3)--(v4);

       \vertexmarkthree at (v4) {};
       \vertexmarkone at (v3) {};
       \vertexmarktwo at (v2) {};
       \actionedge (u1)--(v1);
       \responseedge (u1)--(v4);
       \responseedge (u2)--(v3);
       \responseedge (u4)--(v3);
       \responseedge (u4)--(v1);
      \node[fill=none,draw=none] () at (0.75,-0.5) {$G_{18}$};
     \end{tikzpicture}
     \quad\quad\quad
     \begin{tikzpicture}[scale=0.8,nodes={shape=circle,fill,draw}]
       \node (u1) at (0,0) {};
       \node (u2) at (0,1) {};
       \node (u3) at (0,2) {};
       \node (u4) at (0,3) {};
       \draw (u1)--(u2)--(u3)--(u4)..controls (-1,2) and (-1,1)..(u1);
       
       \vertexmarkone at (u1) {};
       \vertexmarkone at (u3) {};
       
       \vertexmarktwo at (u2) {};
       \vertexmarktwo at (u4) {};
       
       \vertexmarkthree at (u1) {};
       \vertexmarkthree at (u3) {};

       \node (v1) at (1.5,0) {};
       \node (v2) at (1.5,1) {};
       \node (v3) at (1.5,2) {};
       \node (v4) at (1.5,3) {};
       \draw (v1)--(v2)--(v3)--(v4);

       \vertexmarkthree at (v4) {};
       \vertexmarkone at (v3) {};
       \vertexmarktwo at (v2) {};
       \actionedge (u1)--(v1);
       \responseedge (u1)--(v4);
       \responseedge (u3)--(v2);
       \responseedge (u4)--(v1);
      \node[fill=none,draw=none] () at (0.75,-0.5) {$G_{19}$};
     \end{tikzpicture}
     \quad\quad\quad
     \begin{tikzpicture}[scale=0.8,nodes={shape=circle,fill,draw}]
       \node (u1) at (0,0) {};
       \node (u2) at (0,1) {};
       \node (u3) at (0,2) {};
       \node (u4) at (0,3) {};
       \draw (u1)--(u2)--(u3)--(u4)..controls (-1,2) and (-1,1)..(u1);
       
       \vertexmarkone at (u2) {};
       \vertexmarkone at (u4) {};
       
       \vertexmarktwo at (u1) {};
       \vertexmarktwo at (u3) {};
       
       \vertexmarkthree at (u1) {};
       \vertexmarkthree at (u3) {};

       \node (v1) at (1.5,0) {};
       \node (v2) at (1.5,1) {};
       \node (v3) at (1.5,2) {};
       \node (v4) at (1.5,3) {};
       \draw (v1)--(v2)--(v3)--(v4);

       \vertexmarkthree at (v4) {};
       \vertexmarkone at (v3) {};
       \vertexmarktwo at (v2) {};
       \actionedge (u2)--(v1);
       \responseedge (u2)--(v4);
       \responseedge (u1)--(v3);
       \responseedge (u3)--(v3);
       \responseedge (u4)--(v2);
       \node[fill=none,draw=none] () at (0.75,-0.5) {$G_{20}$};
     \end{tikzpicture}
     \quad\quad\quad
     \begin{tikzpicture}[scale=0.8,nodes={shape=circle,fill,draw}]
       \node (u1) at (0,0) {};
       \node (u2) at (0,1) {};
       \node (u3) at (0,2) {};
       \node (u4) at (0,3) {};
       \draw (u1)--(u2)--(u3)--(u4)..controls (-1,2) and (-1,1)..(u1);
       
       \vertexmarkone at (u1) {};
       \vertexmarkone at (u3) {};
       
       \vertexmarktwo at (u2) {};
       \vertexmarktwo at (u4) {};
       
       \node (v1) at (1.5,0) {};
       \node (v2) at (1.5,1) {};
       \node (v3) at (1.5,2) {};
       \node (v4) at (1.5,3) {};
       \draw (v1)--(v2)--(v3)--(v4);

       \vertexmarktwo at (v4) {};
       \vertexmarkone at (v3) {};
       \vertexmarktwo at (v2) {};
       \actionedge (u1)--(v1);
       \responseedge (u1)--(v4);
       \responseedge (u2)--(v3);
       \responseedge (u3)--(v2);
       \responseedge (u4)--(v1);
      \node[fill=none,draw=none] () at (0.75,-0.5) {$G_{21}$};
     \end{tikzpicture}
     \quad\quad\quad
     \begin{tikzpicture}[scale=0.8,nodes={shape=circle,fill,draw}]
       \node (u1) at (0,0) {};
       \node (u2) at (0,1) {};
       \node (u3) at (0,2) {};
       \node (u4) at (0,3) {};
       \draw (u1)--(u2)--(u3)--(u4)..controls (-1,2) and (-1,1)..(u1);
       
       \vertexmarkthree at (u2) {};
       \vertexmarkthree at (u4) {};

       \vertexmarkone at (u2) {};
       \vertexmarkone at (u4) {};
       
       \vertexmarktwo at (u1) {};
       \vertexmarktwo at (u3) {};
       
       \node (v1) at (1.5,0) {};
       \node (v2) at (1.5,1) {};
       \node (v3) at (1.5,2) {};
       \node (v4) at (1.5,3) {};
       \draw (v1)--(v2)--(v3)--(v4);

       \vertexmarkthree at (v4) {};
       \vertexmarkone at (v3) {};
       \vertexmarktwo at (v3) {};
       \actionedge (u1)--(v1);
       \responseedge (u1)--(v4);
       \responseedge (u3)--(v2);
       \responseedge (u4)--(v1);
      \node[fill=none,draw=none] () at (0.75,-0.5) {$G_{22}$};
     \end{tikzpicture}
     \quad\quad\quad
     \begin{tikzpicture}[scale=0.8,nodes={shape=circle,fill,draw}]
       \node (u1) at (0,0) {};
       \node (u2) at (0,1) {};
       \node (u3) at (0,2) {};
       \node (u4) at (0,3) {};
       \draw (u1)--(u2)--(u3)--(u4)..controls (-1,2) and (-1,1)..(u1);
       
       \vertexmarkthree at (u2) {};
       \vertexmarkthree at (u4) {};

       \vertexmarkone at (u1) {};
       \vertexmarkone at (u3) {};
       
       \vertexmarktwo at (u1) {};
       \vertexmarktwo at (u3) {};
       
       \node (v1) at (1.5,0) {};
       \node (v2) at (1.5,1) {};
       \node (v3) at (1.5,2) {};
       \node (v4) at (1.5,3) {};
       \draw (v1)--(v2)--(v3)--(v4);

       \vertexmarkthree at (v4) {};
       \vertexmarkone at (v4) {};
       \vertexmarktwo at (v3) {};
       \actionedge (u1)--(v1);
       \responseedge (u2)--(v3);
       \responseedge (u3)--(v2);
       \responseedge (u4)--(v1);
      \node[fill=none,draw=none] () at (0.75,-0.5) {$G_{23}$};
     \end{tikzpicture}
     \quad\quad\quad
     \begin{tikzpicture}[scale=0.8,nodes={shape=circle,fill,draw}]
       \node (u1) at (0,0) {};
       \node (u2) at (0,1) {};
       \node (u3) at (0,2) {};
       \node (u4) at (0,3) {};
       \draw (u1)--(u2)--(u3)--(u4)..controls (-1,2) and (-1,1)..(u1);
       
       \vertexmarkone at (u2) {};
       \vertexmarkone at (u4) {};
       
       \vertexmarktwo at (u1) {};
       \vertexmarktwo at (u3) {};
       
       \node (v1) at (1.5,0) {};
       \node (v2) at (1.5,1) {};
       \node (v3) at (1.5,2) {};
       \node (v4) at (1.5,3) {};
       \draw (v1)--(v2)--(v3)--(v4);

       \vertexmarkone at (v4) {};
       \vertexmarkone at (v2) {};
       \vertexmarktwo at (v2) {};
       \actionedge (u1)--(v1);
       \responseedge (u1)--(v4);
       \responseedge (u2)--(v3);
       \responseedge (u4)--(v1);
      \node[fill=none,draw=none] () at (0.75,-0.5) {$G_{24}$};
     \end{tikzpicture}
     \quad\quad\quad
     \begin{tikzpicture}[scale=0.8,nodes={shape=circle,fill,draw}]
       \node (u1) at (0,0) {};
       \node (u2) at (0,1) {};
       \node (u3) at (0,2) {};
       \node (u4) at (0,3) {};
       \draw (u1)--(u2)--(u3)--(u4)..controls (-1,2) and (-1,1)..(u1);

       \vertexmarkone at (u2) {};
       \vertexmarkone at (u4) {};
       
       \vertexmarktwo at (u1) {};
       \vertexmarktwo at (u3) {};
       
       \node (v1) at (1.5,0) {};
       \node (v2) at (1.5,1) {};
       \node (v3) at (1.5,2) {};
       \node (v4) at (1.5,3) {};
       \draw (v1)--(v2)--(v3)--(v4);

       \vertexmarkone at (v4) {};
       \vertexmarkone at (v2) {};
       \vertexmarktwo at (v4) {};
       \actionedge (u1)--(v1);
       \responseedge (u2)--(v3);
       \responseedge (u3)--(v2);
       \responseedge (u4)--(v1);
      \node[fill=none,draw=none] () at (0.75,-0.5) {$G_{25}$};
     \end{tikzpicture}
     \caption{Responses to a $C_4$ construction, with the action we
       take to connect the two sections in wavy lines, and
       $C_5$-completing edges shown as dashed lines. Mutual
       adjacencies with outside vertices are denoted as in
       Figure~\ref{fig:P4-and-P3-with-constraints}.}
    \label{fig:C4-and-P4-with-constraints}
   \end{figure}
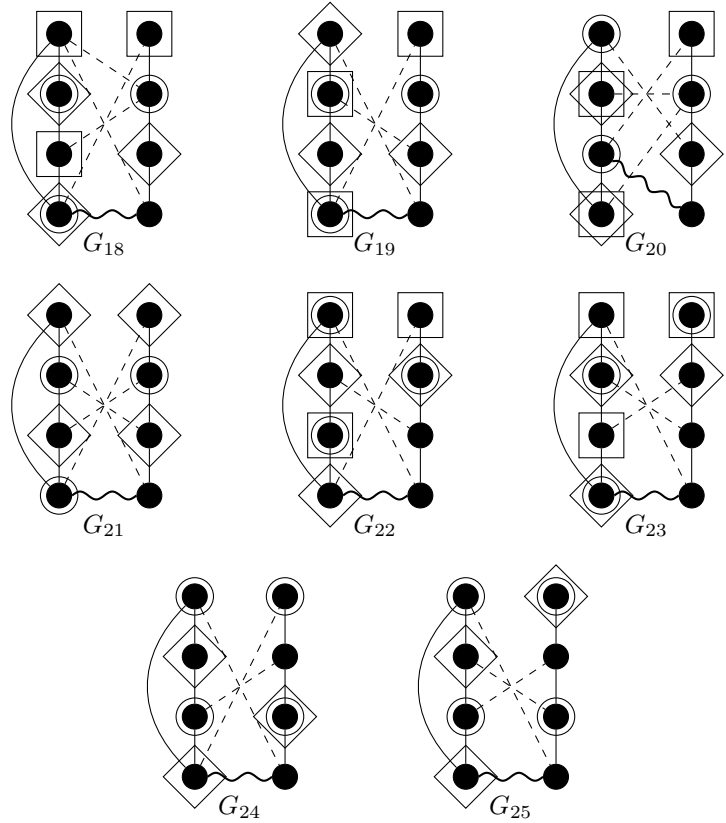

   If the opponent does not add an edge between the $C_4$ and the
   $P_4$, but engages in other interference, then we may craft our
   $P_4$ in such a manner as to be guaranteed that one of the outer
   vertices of the new $P_4$ is free of interference. Our optimal
   responses to each scenario are depicted in
   Figure~\ref{fig:C4-and-P4-with-constraints}. As was described in
   discussion of Figure~\ref{fig:P4-and-P3-with-constraints}, the
   $C_4$ may be arbitrarily highly adjacent to external vertices but
   the $P_4$ was constructed with initially clean vertices, and so at
   most three external adjacencies can be introduced by the opponent
   while we are building the $P_4$. In each case there are sufficiently
   many edges, with sufficiently disjoint sets of endpoints, that the
   opponent cannot, with the one move available after our edge is
   added, successfully make all of these $C_5$-completing moves
   invalid.

    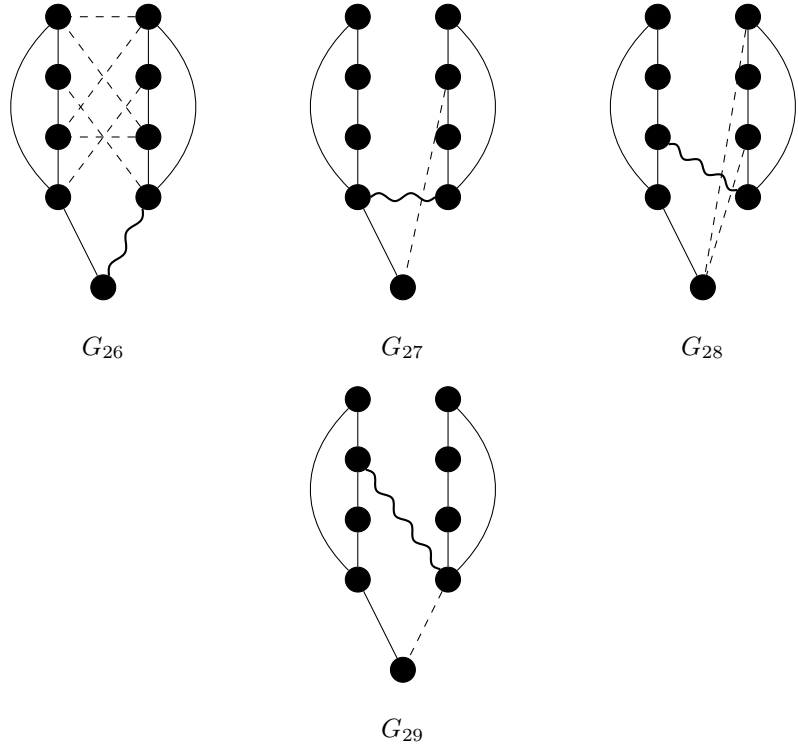
\begin{figure}
      \centering
      \begin{tikzpicture}[scale=0.8,nodes={shape=circle,fill,draw}]
        \node (u1) at (0,0) {};
        \node (u2) at (0,1) {};
        \node (u3) at (0,2) {};
        \node (u4) at (0,3) {};
        \draw (u1)--(u2)--(u3)--(u4)..controls (-1,2) and (-1,1)..(u1);
       
        \node (v1) at (1.5,0) {};
        \node (v2) at (1.5,1) {};
        \node (v3) at (1.5,2) {};
        \node (v4) at (1.5,3) {};
        \draw (v1)--(v2)--(v3)--(v4)..controls (2.5,2) and (2.5,1)..(v1);

        \node (w) at (0.75,-1.5) {};
        \draw (w)--(u1);

        \actionedge (w)--(v1);
        \responseedge (v1)--(u3);
        \responseedge (v2)--(u2);
        \responseedge (v2)--(u4);
        \responseedge (v3)--(u1);
        \responseedge (v4)--(u2);
        \responseedge (v4)--(u4);
      \node[fill=none,draw=none] () at (0.75,-2.5) {$G_{26}$};
      \end{tikzpicture}
      \quad\quad\quad
      \begin{tikzpicture}[scale=0.8,nodes={shape=circle,fill,draw}]
        \node (u1) at (0,0) {};
        \node (u2) at (0,1) {};
        \node (u3) at (0,2) {};
        \node (u4) at (0,3) {};
        \draw (u1)--(u2)--(u3)--(u4)..controls (-1,2) and (-1,1)..(u1);
       
        \node (v1) at (1.5,0) {};
        \node (v2) at (1.5,1) {};
        \node (v3) at (1.5,2) {};
        \node (v4) at (1.5,3) {};
        \draw (v1)--(v2)--(v3)--(v4)..controls (2.5,2) and (2.5,1)..(v1);

        \node (w) at (0.75,-1.5) {};
        \draw (w)--(u1);

        \actionedge (v1)--(u1);
        \responseedge (v3)--(w);
      \node[fill=none,draw=none] () at (0.75,-2.5) {$G_{27}$};
      \end{tikzpicture}
      \quad\quad\quad
      \begin{tikzpicture}[scale=0.8,nodes={shape=circle,fill,draw}]
        \node (u1) at (0,0) {};
        \node (u2) at (0,1) {};
        \node (u3) at (0,2) {};
        \node (u4) at (0,3) {};
        \draw (u1)--(u2)--(u3)--(u4)..controls (-1,2) and (-1,1)..(u1);
       
        \node (v1) at (1.5,0) {};
        \node (v2) at (1.5,1) {};
        \node (v3) at (1.5,2) {};
        \node (v4) at (1.5,3) {};
        \draw (v1)--(v2)--(v3)--(v4)..controls (2.5,2) and (2.5,1)..(v1);

        \node (w) at (0.75,-1.5) {};
        \draw (w)--(u1);

        \actionedge (v1)--(u2);
        \responseedge (v2)--(w);
        \responseedge (v4)--(w);
      \node[fill=none,draw=none] () at (0.75,-2.5) {$G_{28}$};
      \end{tikzpicture}
      \begin{tikzpicture}[scale=0.8,nodes={shape=circle,fill,draw}]
        \node (u1) at (0,0) {};
        \node (u2) at (0,1) {};
        \node (u3) at (0,2) {};
        \node (u4) at (0,3) {};
        \draw (u1)--(u2)--(u3)--(u4)..controls (-1,2) and (-1,1)..(u1);
       
        \node (v1) at (1.5,0) {};
        \node (v2) at (1.5,1) {};
        \node (v3) at (1.5,2) {};
        \node (v4) at (1.5,3) {};
        \draw (v1)--(v2)--(v3)--(v4)..controls (2.5,2) and (2.5,1)..(v1);

        \node (w) at (0.75,-1.5) {};
        \draw (w)--(u1);

        \actionedge (v1)--(u3);
        \responseedge (v1)--(w);
      \node[fill=none,draw=none] () at (0.75,-2.5) {$G_{29}$};
      \end{tikzpicture}
      \caption{When we add an additional vertex to a construction with
        two cycles, the opponent may place an edge, seen here with
        wavy lines. In each case, we may complete a $C_5$ with any of
        the dashed edges.}
      \label{fig:C4-and-C4-with-interruption}
    \end{figure}

    It now remains only to show that, if the opponent spends the last
    move after we construct this second $P_4$ converting it to a
    $C_4$, that we can nonetheless build a $C_5$. Doing so requires
    that we make use of one more new vertex, adding an edge between
    the new vertex and the most constrained vertex in the second $C_4$
    as seen in Figure~\ref{fig:C4-and-C4-with-interruption}. If the
    opponent responds by adding an edge among these vertices, we can
    always complete a $C_5$ immediately, since each of these cases has
    at least one $C_5$-completing edge which is not constrained: in
    the specfic case of $G_{26}$, there are edges between a large
    enough set of vertices that the two vertices the opponent has
    interfered with are insufficient to prevent one such edge from
    still being valid, while in the other cases, the $C_5$ completing
    edge has the known clean new point as an one of its endpoints. If
    instead the opponent does not provide an edge internal to the
    graph, then we know the opponent will have interfered at most
    three times, twice during the construction of the second $C_4$ and
    once after adding the most recent edge. These possibilities are
    seen in Figure~\ref{fig:C4-and-C4-with-constraints}, and in each
    case an optimal choice of move induces sufficiently independent
    $C_4$-completing edges that no single move by the opponent can
    obstruct them all. As was described in discussion of
    Figure~\ref{fig:P4-and-P3-with-constraints}, the first $C_4$ may
    be arbitrarily highly adjacent to external vertices but the second
    $C_4$ was constructed with initially clean vertices, and so at
    most two external adjacencies can be introduced by the opponent
    while we are building the $P_4$, with one move spent closing our
    $P_4$ into a $C_4$, and the opponent may then introduce one more
    adjacency while we introduce the ninth vertex.

    \begin{figure}
      \centering
      \begin{tikzpicture}[scale=0.65,nodes={shape=circle,fill,draw}]
        \node (u1) at (0,0) {};
        \node (u2) at (0,1) {};
        \node (u3) at (0,2) {};
        \node (u4) at (0,3) {};
        \draw (u1)--(u2)--(u3)--(u4)..controls (-1,2) and (-1,1)..(u1);
       
        \node (v1) at (1.5,0) {};
        \node (v2) at (1.5,1) {};
        \node (v3) at (1.5,2) {};
        \node (v4) at (1.5,3) {};
        \draw (v1)--(v2)--(v3)--(v4)..controls (2.5,2) and (2.5,1)..(v1);

        \node (w) at (0.75,-1.5) {};
        \draw (w)--(u1);

        \vertexmarkone at (u1) {};
        \vertexmarkone at (u3) {};
        \vertexmarkone at (v3) {};
       
        \vertexmarktwo at (u1) {};
        \vertexmarktwo at (u3) {};
        \vertexmarktwo at (v2) {};
        \vertexmarktwo at (v4) {};

        \actionedge (u3)--(v1);
        \responseedge (u2)--(v3);
        \responseedge (u4)--(v3);
        \responseedge (w)--(v1);
      \node[fill=none,draw=none] () at (0.75,-2.5) {$G_{30}$};
      \end{tikzpicture}
      \quad\quad\quad
      \begin{tikzpicture}[scale=0.65,nodes={shape=circle,fill,draw}]
        \node (u1) at (0,0) {};
        \node (u2) at (0,1) {};
        \node (u3) at (0,2) {};
        \node (u4) at (0,3) {};
        \draw (u1)--(u2)--(u3)--(u4)..controls (-1,2) and (-1,1)..(u1);
       
        \node (v1) at (1.5,0) {};
        \node (v2) at (1.5,1) {};
        \node (v3) at (1.5,2) {};
        \node (v4) at (1.5,3) {};
        \draw (v1)--(v2)--(v3)--(v4)..controls (2.5,2) and (2.5,1)..(v1);

        \node (w) at (0.75,-1.5) {};
        \draw (w)--(u1);

        \vertexmarkone at (w) {};
        \vertexmarkone at (u2) {};
        \vertexmarkone at (u4) {};
        \vertexmarkone at (v3) {};
       
        \vertexmarktwo at (w) {};
        \vertexmarktwo at (u2) {};
        \vertexmarktwo at (u4) {};
        \vertexmarktwo at (v2) {};
        \vertexmarktwo at (v4) {};

        \actionedge (u1)--(v3);
        \responseedge (u2)--(v1);
        \responseedge (u3)--(v4);
        \responseedge (u3)--(v2);
        \responseedge (u4)--(v1);
        \responseedge (w)--(v1);
      \node[fill=none,draw=none] () at (0.75,-2.5) {$G_{31}$};
      \end{tikzpicture}
      \quad\quad\quad
      \begin{tikzpicture}[scale=0.65,nodes={shape=circle,fill,draw}]
        \node (u1) at (0,0) {};
        \node (u2) at (0,1) {};
        \node (u3) at (0,2) {};
        \node (u4) at (0,3) {};
        \draw (u1)--(u2)--(u3)--(u4)..controls (-1,2) and (-1,1)..(u1);
       
        \node (v1) at (1.5,0) {};
        \node (v2) at (1.5,1) {};
        \node (v3) at (1.5,2) {};
        \node (v4) at (1.5,3) {};
        \draw (v1)--(v2)--(v3)--(v4)..controls (2.5,2) and (2.5,1)..(v1);

        \node (w) at (0.75,-1.5) {};
        \draw (w)--(u1);

        \vertexmarkone at (w) {};
        \vertexmarkone at (u2) {};
        \vertexmarkone at (u4) {};
        \vertexmarkone at (v3) {};
       
        \vertexmarktwo at (u1) {};
        \vertexmarktwo at (u3) {};
        \vertexmarktwo at (v2) {};

        \vertexmarkthree at (w) {};
        \vertexmarkthree at (u2) {};
        \vertexmarkthree at (u4) {};
        \vertexmarkthree at (v4) {};

        \actionedge (w)--(v1);
        \responseedge (v1)--(u3);
        \responseedge (v2)--(u2);
        \responseedge (v2)--(u4);
        \responseedge (v3)--(u1);
      \node[fill=none,draw=none] () at (0.75,-2.5) {$G_{32}$};
      \end{tikzpicture}
      \quad\quad\quad
      \begin{tikzpicture}[scale=0.65,nodes={shape=circle,fill,draw}]
        \node (u1) at (0,0) {};
        \node (u2) at (0,1) {};
        \node (u3) at (0,2) {};
        \node (u4) at (0,3) {};
        \draw (u1)--(u2)--(u3)--(u4)..controls (-1,2) and (-1,1)..(u1);
       
        \node (v1) at (1.5,0) {};
        \node (v2) at (1.5,1) {};
        \node (v3) at (1.5,2) {};
        \node (v4) at (1.5,3) {};
        \draw (v1)--(v2)--(v3)--(v4)..controls (2.5,2) and (2.5,1)..(v1);

        \node (w) at (0.75,-1.5) {};
        \draw (w)--(u1);

        \vertexmarkone at (u1) {};
        \vertexmarkone at (u3) {};
        \vertexmarkone at (v3) {};
       
        \vertexmarktwo at (w) {};
        \vertexmarktwo at (u2) {};
        \vertexmarktwo at (u4) {};
        \vertexmarktwo at (v2) {};

        \vertexmarkthree at (u1) {};
        \vertexmarkthree at (u3) {};
        \vertexmarkthree at (v4) {};

        \actionedge (w)--(v4);
        \responseedge (u1)--(v2);
        \responseedge (u2)--(v1);
        \responseedge (u2)--(v3);
        \responseedge (u4)--(v1);
        \responseedge (u4)--(v3);
      \node[fill=none,draw=none] () at (0.75,-2.5) {$G_{33}$};
      \end{tikzpicture}
      \quad\quad\quad
      \begin{tikzpicture}[scale=0.65,nodes={shape=circle,fill,draw}]
        \node (u1) at (0,0) {};
        \node (u2) at (0,1) {};
        \node (u3) at (0,2) {};
        \node (u4) at (0,3) {};
        \draw (u1)--(u2)--(u3)--(u4)..controls (-1,2) and (-1,1)..(u1);
       
        \node (v1) at (1.5,0) {};
        \node (v2) at (1.5,1) {};
        \node (v3) at (1.5,2) {};
        \node (v4) at (1.5,3) {};
        \draw (v1)--(v2)--(v3)--(v4)..controls (2.5,2) and (2.5,1)..(v1);

        \node (w) at (0.75,-1.5) {};
        \draw (w)--(u1);

        \vertexmarkone at (u1) {};
        \vertexmarkone at (u3) {};
        \vertexmarkone at (v3) {};
       
        \vertexmarktwo at (w) {};
        \vertexmarktwo at (u2) {};
        \vertexmarktwo at (u4) {};
        \vertexmarktwo at (v2) {};
        \vertexmarktwo at (v4) {};

        \actionedge (u3)--(v1);
        \responseedge (u1)--(v2);
        \responseedge (u1)--(v4);
        \responseedge (u2)--(v3);
        \responseedge (u4)--(v3);
        \responseedge (w)--(v1);
      \node[fill=none,draw=none] () at (0.75,-2.5) {$G_{34}$};
      \end{tikzpicture}
      \quad\quad\quad
      \begin{tikzpicture}[scale=0.65,nodes={shape=circle,fill,draw}]
        \node (u1) at (0,0) {};
        \node (u2) at (0,1) {};
        \node (u3) at (0,2) {};
        \node (u4) at (0,3) {};
        \draw (u1)--(u2)--(u3)--(u4)..controls (-1,2) and (-1,1)..(u1);
       
        \node (v1) at (1.5,0) {};
        \node (v2) at (1.5,1) {};
        \node (v3) at (1.5,2) {};
        \node (v4) at (1.5,3) {};
        \draw (v1)--(v2)--(v3)--(v4)..controls (2.5,2) and (2.5,1)..(v1);

        \node (w) at (0.75,-1.5) {};
        \draw (w)--(u1);

        \vertexmarkone at (w) {};
        \vertexmarkone at (u2) {};
        \vertexmarkone at (u4) {};
        \vertexmarkone at (v3) {};
       
        \vertexmarktwo at (u1) {};
        \vertexmarktwo at (u3) {};
        \vertexmarktwo at (v2) {};
        \vertexmarktwo at (v4) {};

        \actionedge (w)--(v1);
        \responseedge (v1)--(u3);
        \responseedge (v2)--(u2);
        \responseedge (v2)--(u4);
        \responseedge (v3)--(u1);
        \responseedge (v4)--(u2);
        \responseedge (v4)--(u4);
      \node[fill=none,draw=none] () at (0.75,-2.5) {$G_{35}$};
      \end{tikzpicture}
      \quad\quad\quad
      \begin{tikzpicture}[scale=0.65,nodes={shape=circle,fill,draw}]
        \node (u1) at (0,0) {};
        \node (u2) at (0,1) {};
        \node (u3) at (0,2) {};
        \node (u4) at (0,3) {};
        \draw (u1)--(u2)--(u3)--(u4)..controls (-1,2) and (-1,1)..(u1);
       
        \node (v1) at (1.5,0) {};
        \node (v2) at (1.5,1) {};
        \node (v3) at (1.5,2) {};
        \node (v4) at (1.5,3) {};
        \draw (v1)--(v2)--(v3)--(v4)..controls (2.5,2) and (2.5,1)..(v1);

        \node (w) at (0.75,-1.5) {};
        \draw (w)--(u1);

        \vertexmarkone at (w) {};
        \vertexmarkone at (u2) {};
        \vertexmarkone at (u4) {};
        \vertexmarkone at (v3) {};
       
        \vertexmarktwo at (u1) {};
        \vertexmarktwo at (u3) {};
        \vertexmarktwo at (v3) {};

        \vertexmarkthree at (w) {};
        \vertexmarkthree at (u2) {};
        \vertexmarkthree at (u4) {};
        \vertexmarkthree at (v4) {};

        \actionedge (w)--(v2);
        \responseedge (u1)--(v4);
        \responseedge (u2)--(v1);
        \responseedge (u3)--(v2);
      \node[fill=none,draw=none] () at (0.75,-2.5) {$G_{36}$};
      \end{tikzpicture}
      \quad\quad\quad
      \begin{tikzpicture}[scale=0.65,nodes={shape=circle,fill,draw}]
        \node (u1) at (0,0) {};
        \node (u2) at (0,1) {};
        \node (u3) at (0,2) {};
        \node (u4) at (0,3) {};
        \draw (u1)--(u2)--(u3)--(u4)..controls (-1,2) and (-1,1)..(u1);
       
        \node (v1) at (1.5,0) {};
        \node (v2) at (1.5,1) {};
        \node (v3) at (1.5,2) {};
        \node (v4) at (1.5,3) {};
        \draw (v1)--(v2)--(v3)--(v4)..controls (2.5,2) and (2.5,1)..(v1);

        \node (w) at (0.75,-1.5) {};
        \draw (w)--(u1);

        \vertexmarkone at (u1) {};
        \vertexmarkone at (u3) {};
        \vertexmarkone at (v3) {};
       
        \vertexmarktwo at (w) {};
        \vertexmarktwo at (u2) {};
        \vertexmarktwo at (u4) {};
        \vertexmarktwo at (v3) {};

        \vertexmarkthree at (u1) {};
        \vertexmarkthree at (u3) {};
        \vertexmarkthree at (v4) {};

        \actionedge (w)--(v1);
        \responseedge (u2)--(v2);
        \responseedge (u2)--(v4);
        \responseedge (u4)--(v2);
        \responseedge (u4)--(v4);
        \responseedge (u3)--(v1);
      \node[fill=none,draw=none] () at (0.75,-2.5) {$G_{37}$};
      \end{tikzpicture}
      \quad\quad\quad
      \begin{tikzpicture}[scale=0.65,nodes={shape=circle,fill,draw}]
        \node (u1) at (0,0) {};
        \node (u2) at (0,1) {};
        \node (u3) at (0,2) {};
        \node (u4) at (0,3) {};
        \draw (u1)--(u2)--(u3)--(u4)..controls (-1,2) and (-1,1)..(u1);
       
        \node (v1) at (1.5,0) {};
        \node (v2) at (1.5,1) {};
        \node (v3) at (1.5,2) {};
        \node (v4) at (1.5,3) {};
        \draw (v1)--(v2)--(v3)--(v4)..controls (2.5,2) and (2.5,1)..(v1);

        \node (w) at (0.75,-1.5) {};
        \draw (w)--(u1);

        \vertexmarkone at (w) {};
        \vertexmarkone at (u2) {};
        \vertexmarkone at (u4) {};
        \vertexmarkone at (v2) {};
       
        \vertexmarktwo at (u1) {};
        \vertexmarktwo at (u3) {};
        \vertexmarktwo at (v2) {};

        \vertexmarkthree at (w) {};
        \vertexmarkthree at (u2) {};
        \vertexmarkthree at (u4) {};
        \vertexmarkthree at (v4) {};

        \actionedge (w)--(v1);
        \responseedge (u1)--(v3);
        \responseedge (u3)--(v1);
      \node[fill=none,draw=none] () at (0.75,-2.5) {$G_{38}$};
      \end{tikzpicture}
      \quad\quad\quad
      \begin{tikzpicture}[scale=0.65,nodes={shape=circle,fill,draw}]
        \node (u1) at (0,0) {};
        \node (u2) at (0,1) {};
        \node (u3) at (0,2) {};
        \node (u4) at (0,3) {};
        \draw (u1)--(u2)--(u3)--(u4)..controls (-1,2) and (-1,1)..(u1);
       
        \node (v1) at (1.5,0) {};
        \node (v2) at (1.5,1) {};
        \node (v3) at (1.5,2) {};
        \node (v4) at (1.5,3) {};
        \draw (v1)--(v2)--(v3)--(v4)..controls (2.5,2) and (2.5,1)..(v1);

        \node (w) at (0.75,-1.5) {};
        \draw (w)--(u1);

        \vertexmarkone at (u1) {};
        \vertexmarkone at (u3) {};
        \vertexmarkone at (v2) {};
       
        \vertexmarktwo at (w) {};
        \vertexmarktwo at (u2) {};
        \vertexmarktwo at (u4) {};
        \vertexmarktwo at (v2) {};

        \vertexmarkthree at (u1) {};
        \vertexmarkthree at (u3) {};
        \vertexmarkthree at (v4) {};

        \actionedge (w)--(v1);
        \responseedge (u1)--(v3);
        \responseedge (u3)--(v1);
      \node[fill=none,draw=none] () at (0.75,-2.5) {$G_{39}$};
      \end{tikzpicture}
      \caption{Responses to a $C_4$ construction, if the opponent has
        turned our new $P_4$ into a $C_4$ as well. Wavy edges denote
        the correct move to make, and dashed lines indicate edges
        which will then complete a $C_5$. Mutual adjacencies with
        outside vertices are denoted as in
        Figure~\ref{fig:P4-and-P3-with-constraints}.}
      \label{fig:C4-and-C4-with-constraints}
    \end{figure}
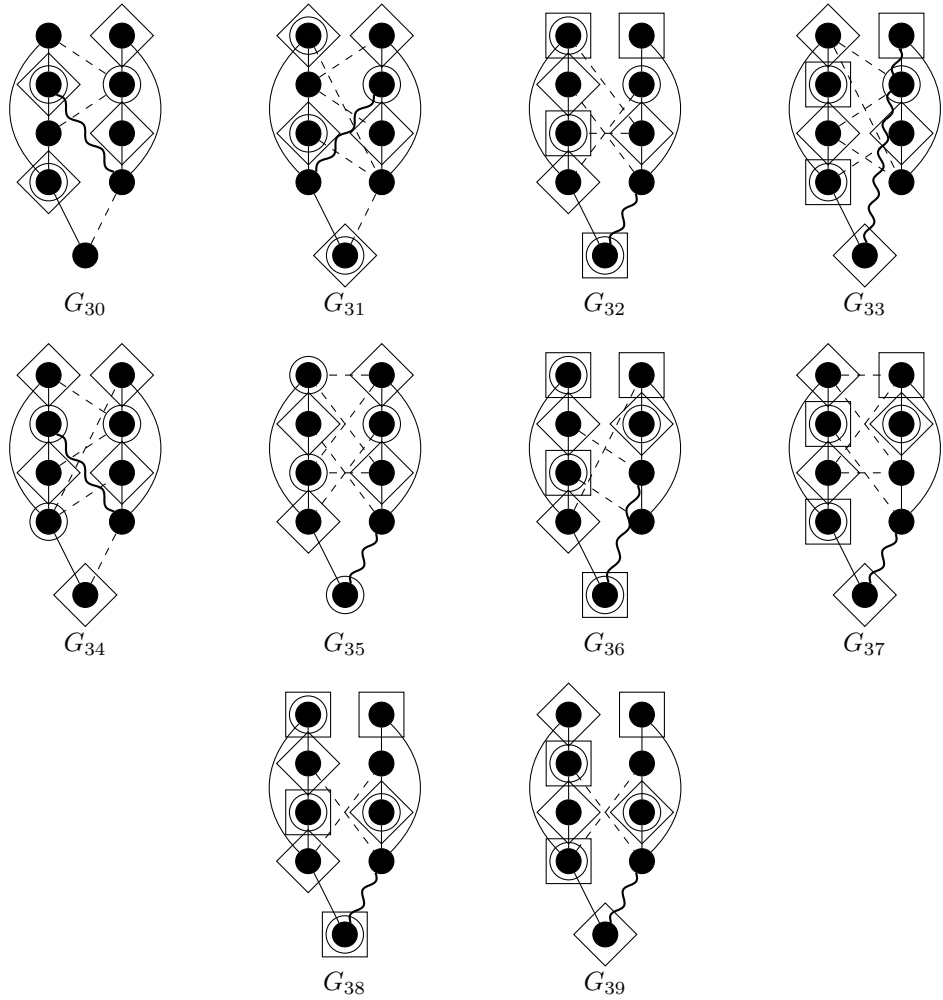

    \begin{table}
      \centering
      \begin{tabular}{|l|c|r|c|c||r|}\hline
        Graph&Comp. usage&Opp. moves&Structures&Dist. 3?&Count reduction\\\hline
        $G_1$, $G_2$&$5$&$4$&---&No&$9$\\
        $G_3$&$6$&$5$&$P_2$&No&$9$\\
        $G_4$, $G_5$, $G_6$&$6$&$5$&---&Yes&$10$\\
        $G_7$--$G_{12}$&$6$&$\leq 7$&$P_2$&No&$\leq11$\\
        $G_{13}$&$5$&$3$&---&No&$8$\\
        $G_{14}$&$6$&$4$&$P_2$&No&$8$\\
        $G_{15}$&$6$&$4$&---&Yes&$9$\\
        $G_{16}$&$7$&$5$&$P_3$&No&$8$\\
        $G_{16}$, w/ response&$7$&$4$&$P_2$&Yes&$\leq8$\\
        $G_{17}$&$7$&$5$&$P_2$&Yes&$9$\\
        $G_{18}$--$G_{25}$&$7$&$\leq 7$&$P_3$&No&$\leq10$\\
        $G_{23}$ or $G_{25}$, w/ response&$7$&$\leq 6$&$P_2$&Yes&$\leq10$\\
        $G_{26}$--$G_{29}$&$8$&$5$&$P_3$&Yes&$8$\\
        $G_{30}$--$G_{39}$&$8$&$\leq7$&$P_3$&Yes&$\leq 10$\\
        $G_{30}$--$G_{39}$, w/ response&$8$&$\leq6$&$P_2$&Yes&$\leq 11$\\\hline
      \end{tabular}
      \caption{Effect of given moves, and opponent's free moves, on the number of components remaining in the graph in each of the above 39 scenarios}
      \label{tab:costs}
    \end{table}

    The total reductions in count resulting from each of the 39 cases
    illustrated here are summarized in Table~\ref{tab:costs}. The
    ``component usage'' column counts the number of components which
    have been connected as a result of this procedure, which is simply
    one less than the number of vertices in the graph in question.
    Note that in practice the component usage may be reduced by
    distance-3 vertices or pre-existing structures in the last $C_5$
    construction; this effect has been factored into the count
    calculation.

    The ``opponent moves'' column counts the number of moves the
    opponent made which do not appear on the graph. This is ordinarily
    equal to the number of moves we have taken but may be reduced in
    cases where the graph only arises from an opponent move, as in the
    case where one of our paths has been closed into a $C_4$. Since
    the opponent moves might include joining components or distance-3
    vertices within a single component, each opponent move may result
    in a count reduction of 1. As was the case with the component
    usage calculation, the number of moves the opponent has may be
    reduced by the presence of pre-existing structures from the last
    $C_5$ construction; this too is factored into the count from that
    construction. Several of the graphs include the presumption that
    opponent moves may be spent interferentially, and in these graphs
    only an upper bound on the number of possible unconstrained
    opponent moves is included, since the opponent may spend several
    moves in interference.

    The ``structures'' and ``distance 3'' columns describe the
    elements which remain after the construction procedure and which
    will reduce the cost of the next construction. In particular, any
    $P_2$ and $P_3$ subgraphs which are left in $U$ are described
    here, as well as if a single component is guaranteed to contain
    two vertices at a distance of 3.

    Under certain circumstances, the opponent's actions may affect the
    remaining structures which we are able to leave behind. Thus, with
    a reduction in the opponent's unconstrained moves, they may force
    a less desirable set of remaining structures. In particular,
    graphs $G_{16}$, $G_{23}$, $G_{25}$, and $G_{30}$ through $G_{39}$
    are subject to such interference. These scenarios are handled
    separately in Table~\ref{tab:costs}. With all of these
    quantifications of the 37 different graphs, the effect of each
    scenario on the total count is easily calculated to be the sum of
    the component usage and the number of the opponent's unconstrained
    moves, reduced by whatever count bonus accrues from leftover
    structures and distance properties in $U$. It is thus easily
    observed that no scenario reduces the count by more than 11.
\end{proof}

Once we have constructed these $\lfloor\frac{n-2}{11}\rfloor$ cycles
of length 5, it is easy to show that, regardless of the following
moves, the final edge-density is significantly less than the
balanced-bipartite bound of $\frac14$.


\begin{theorem}
\begin{gather*}
\sat_g(\cf;n)\leq\frac{26}{121}n^2+o(n^2)\\
\sat'_g(\cf;n)\leq\frac{26}{121}n^2+o(n^2)\\
\end{gather*}
\end{theorem}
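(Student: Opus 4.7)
The plan is to combine Theorem~\ref{theorem:C5-construction} with the observation from the introduction and a straightforward edge count, essentially implementing the $k=11$ instance of the back-of-the-envelope calculation sketched in the introduction. Since the statement of Theorem~\ref{theorem:C5-construction} does not stipulate which player moves first, the very same argument bounds both $\sat_g$ and $\sat_g'$; without loss of generality I let the minimizing player execute the $C_5$-building strategy.

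First I would have the minimizer play the strategy guaranteed by Theorem~\ref{theorem:C5-construction}, which over $O(n)$ moves forces the game board to contain $m := \lfloor (n-2)/11 \rfloor$ pairwise vertex-disjoint copies of $C_5$. After this phase the minimizer can simply continue playing any legal move until a maximal triangle-free graph is reached. Since every edge played after the construction phase still preserves triangle-freeness, and no edge is ever removed, the $m$ distinguished copies of $C_5$ remain as subgraphs of the final $K_3$-saturated graph $G$ (and are automatically induced, because any chord of a $C_5$ would create a triangle).

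Next I would bound $|E(G)|$ by partitioning $V(G)$ into the $5m$ vertices lying in the $m$ distinguished $C_5$'s and the $R := n-5m$ remaining vertices. Invoking the observation from the introduction, $G$ has exactly $5m$ edges inside the $C_5$'s, at most $10\binom{m}{2}$ edges between distinct pairs of $C_5$'s, and at most $2Rm$ edges from the remaining vertices to the $C_5$'s; since $G$ restricted to the $R$ leftover vertices is still triangle-free, Mantel's theorem contributes at most $R^2/4$ additional edges. Substituting $m = n/11 + O(1)$ and $R = 6n/11 + O(1)$ and summing yields
\[
5m + 10\binom{m}{2} + 2Rm + \frac{R^2}{4} = \frac{5}{121}n^2 + \frac{12}{121}n^2 + \frac{9}{121}n^2 + o(n^2) = \frac{26}{121}n^2 + o(n^2),
\]
which is the claimed bound. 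The only real obstacle is packaging Theorem~\ref{theorem:C5-construction} into a full-game strategy (handled by simply continuing legally after the construction phase); the arithmetic above is routine, and the same counting applies verbatim when the maximizer moves first, covering the $\sat_g'$ case.
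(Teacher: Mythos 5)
Your proposal is correct and follows essentially the same route as the paper's own proof: apply Theorem~\ref{theorem:C5-construction} to obtain roughly $n/11$ disjoint $C_5$'s, then bound the edges within and between the cycles via the observation, the cycle-to-leftover edges by $2$ per vertex per cycle, and the leftover vertices by the bipartite (Mantel) bound, summing to $\frac{26}{121}n^2+o(n^2)$. The arithmetic and the handling of both $\sat_g$ and $\sat_g'$ match the paper.
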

\begin{proof}
  Given $n$ vertices, Theorem~\ref{theorem:C5-construction}
  guarantees that we may construct approximately $\frac n{11}$ cycles
  of length 5, leaving approximately $\frac{6n}{11}$ vertices not
  incorporated into cycles. Between any two $C_5$s in a triangle-free graph there are 10 or fewer edges, so the number of edges among the
  approximately $\frac n{11}$ cycles is no more than
  $$5\cdot\frac n{11}+10\cdot\binom{\frac n{11}}2+o(n^2)=\frac{5}{121}n^2+o(n^2)$$
  while among the remaining $\frac{6n}{11}$ vertices, the densest the
  edges can possibly be is in a balanced complete bipartite graph,
  which has approximately
  $\frac14\left(\frac{6n}{11}\right)^2=\frac{9}{121}n^2$ edges, and between
  these $\frac{6n}{11}$ remaining vertices and the cycles, we know
  each cycle can have at most two vertices adjacent to a single
  vertex, yielding
  $2\cdot\frac{6n}{11}\cdot\frac{n}{11}=\frac{12}{121}n^2$ edges, for a
  total edge density of
  $$\frac{5}{121}+\frac{9}{121}+\frac{12}{121}=\frac{26}{121}$$
\end{proof}
\section{Potential improvements}
The above-determined edge density is still slightly above the
previously cited bound of $\frac15$, although it represents a
significant improvement on the trivial bound of $\frac14$. Certain
aspects of the above proof and underlying strategy, however, may be
amenable to improvement. In particular, the above result does not
include any strategic choices made after the completion of as many
$C_5$ subgraphs as possible, but merely assumes worst-case results
regarding the introduction of additional edges.

One aspect of the above method which may lend itself to improvement is
that the term $10\cdot\binom{\frac n{11}}2$ results from the
assumption that all except a subquadratic number of pairs of $C_5$
subgraphs have ten edges among them. After the $C_5$-construction
phase, since only a linear number of edges have been added, almost all
of these pairs will have no edges among them, and it may be possible
to ``spoil'' the prospect of adding ten edges among most of the
pairs. We may note that, up to isomorphism, there is only one way to
add ten edges among a pair of $C_5$s without forming a triangle. There
are 10 different ways to orient this structure on a pair of labeled
$C_5$s, and a specific orientation may be determined by as few as two
edges. Thus, by adding edges judiciously between two $C_5$s, it is
likely possible to force the opponent, if they are attempting to build
such a structure of ten edges, to include several specific edges,
which we could then make moves chosen to prohibit. It thus seems
likely that in a positive fraction of the pairs --- or quite possibly
in almost all of the pairs --- an edge density of 10 could be
prevented. Such an improvement could reduce this term from
$10\cdot\binom{\frac n{11}}2$ to $9\cdot\binom{\frac n{11}}2$,
effecting a reduction of $\frac1{242}$ in the edge density achieved. A
further reduction by a strategy limiting edge density to 8 or less (a
maximal triangle-free set of adjacencies between two $C_5$s could use
as few as 5 edges) could serve to reduce the density further, but
unfortunately doing so is likely to be very difficult. In reducing
edge density from 10 to 9, we would be aided by the fact that a unique
configuration is necessary to achieve 10 edges, whereas 9 edges can be
achieved in many different ways.

It may also be possible to improve the efficiency of the
$C_5$-building process. As was seen in Table~\ref{tab:costs}, a cost
of 11 when building a $C_5$ emerges only if the opponent is
interfering minimally with our construction and is instead building
edges elsewhere. It is possible that wise utilization of the vertices
on which the opponent is building these edges might serve to bring our
efficiency up to an ability to build approximately $\frac n{10}$
cycles of length 5. Such an improvement would reduce the edge density
to $\frac{17}{80}$, or, if combined with the above edge-density
improvement, $\frac{83}{400}$. Even this hoped-for improvement,
however, is still slightly higher than the sought-after $\frac15$
edge-density bound, which may not be achievable even by refinements of
this strategy.

\bibliographystyle{amsplain}
\bibliography{bib,extra}

\providecommand{\bysame}{\leavevmode\hbox to3em{\hrulefill}\thinspace}
\providecommand{\MR}{\relax\ifhmode\unskip\space\fi MR }
\providecommand{\MRhref}[2]{%
  \href{http://www.ams.org/mathscinet-getitem?mr=#1}{#2}
}
\providecommand{\href}[2]{#2}
\begin{thebibliography}{1}

\bibitem{Car-Kin-Rei-Wes-14-u}
James~M. Carraher, William~B. Kinnersley, Benjamin Reiniger, and Douglas~B.
  West, \emph{The game saturation number of a graph}, arXiv:1405.2834, 2014.

\bibitem{Cat-Har-Rob-01}
Steven~C. Cater, Frank Harary, and Robert~W. Robinson, \emph{One-color triangle
  avoidance games}, Congr. Numer. \textbf{153} (2001), 211--221.

\bibitem{Fur-Rei-Ser-91}
Zolt{\'a}n F{\"u}redi, Dave Reimer, and {\'A}kos Seress, \emph{Hajnal's
  triangle-free game and extremal graph problems}, Congr. Numer. \textbf{82}
  (1991), 123--128.

\bibitem{Gor-Pra-12}
Przemys{\l}aw Gordinowicz and Pawe{\l} Pra{\l}at, \emph{The first player wins
  the one-colour triangle avoidance game on $16$ vertices}, Discuss. Math.
  Graph Theory \textbf{32} (2012), 181--185.

\bibitem{Lee-Rie-14-u}
Jonathan~D. Lee and Ago-Erik Riet, \emph{F-saturation games}, arXiv:1406.1500.

\bibitem{Pra-10}
Pawe{\l} Pra{\l}at, \emph{A note on the one-colour avoidance game on graphs},
  J. Combin. Math. Combin. Comput. \textbf{75} (2010), 85--94.

\bibitem{Ser-92}
{\'A}kos Seress, \emph{On {H}ajnal's triangle-free game}, Graphs Combin.
  \textbf{8} (1992), 75--79.

\bibitem{West}
Douglas~B. West, \emph{The {F}-saturation game and game saturation number},
  2009, 2011, \url{http://www.math.illinois.edu/~dwest/regs/fsatgame.html}.

\end{thebibliography}

\end{document}